\tikzset{line0/.style={red!90!black, solid}}
\tikzset{line1/.style={blue!100!black, dashed}}
\tikzset{line2/.style={green!60!black, densely dotted}}
\tikzset{line3/.style={black, dashdotted}}
\tikzset{line4/.style={violet, thick, loosely dotted}}
\newtheorem{theorem}{Theorem}[section]
\newtheorem{lemma}[theorem]{Lemma}
\newtheorem{corollary}[theorem]{Corollary}
\newtheorem{proposition}[theorem]{Proposition}
\newtheorem{question}[theorem]{Question}
\newtheorem{observation}[theorem]{Observation}
\def\beq{\begin{equation}}\def\eeq{\end{equation}}
\def\beqn{\begin{eqnarray}}\def\eeqn{\end{eqnarray}}
\title{Monochromatic balanced components, matchings, and paths in multicolored complete bipartite graphs}
\author{Louis DeBiasio\thanks{Department of Mathematics, Miami University, Oxford, Ohio. \texttt{debiasld@miamioh.edu}, \texttt{kruegera@miamioh.edu}} \thanks{Research supported in part by Simons Foundation Collaboration Grant \#283194}
\and
Andr\'as Gy\'arf\'as\thanks{Alfr\'ed R\'enyi Institute of Mathematics, Hungarian Academy of Sciences, Budapest, P.O. Box 127, Budapest, Hungary, H-1364. \texttt{gyarfas.andras@renyi.mta.hu}, \texttt{ruszinko.miklos@renyi.mta.hu}, \texttt{sarkozy.gabor@renyi.mta.hu}} \thanks{Research supported in part by
NKFIH Grant No. K116769.}
\and Robert A. Krueger\footnotemark[1] \and Mikl\'os Ruszink\'o\footnotemark[3]\thanks{Research supported in part by
NKFIH Grant No. K116769.} \and G\'{a}bor N. S\'ark\"ozy\footnotemark[3]
\thanks{Computer Science Department, Worcester Polytechnic Institute, Worcester, MA.} \thanks{Research supported in part by
NKFIH Grants No. K116769, K117879.}
}
\date{}
\begin{document}
 \maketitle

\begin{abstract}
It is well-known that in every $r$-coloring of the edges of the complete bipartite graph $K_{n,n}$ there is a monochromatic connected component with at least $\frac{2n}{r}$ vertices. It would be interesting to know whether we can additionally require that this large component be balanced; that is, is it true that in every $r$-coloring of $K_{n,n}$ there is a monochromatic component that meets both sides in at least $n/r$ vertices?

Over forty years ago, Gy\'arf\'as and Lehel \cite{GYL} and independently Faudree and Schelp \cite{FS2} proved that any $2$-colored $K_{n,n}$ contains a monochromatic $P_n$.  Very recently,
Buci\'c, Letzter and Sudakov \cite{BLS} proved that every $3$-colored $K_{n,n}$ contains a monochromatic connected matching (a matching whose edges are in the same connected component) of size $\lceil n/3 \rceil$.  So the answer is strongly ``yes'' for $1\leq r\leq 3$.

We provide a short proof of (a non-symmetric version of) the original question for $1\leq r\leq 3$; that is, every $r$-coloring of $K_{m,n}$ has a monochromatic component that meets each side in a $1/r$ proportion of its part size.  Then, somewhat surprisingly, we show that the answer to the question  is ``no'' for all $r\ge 4$.  For instance, there are $4$-colorings of $K_{n,n}$ where the largest balanced monochromatic component has $n/5$ vertices in both partite classes (instead of $n/4$).  Our constructions are based on lower bounds for the $r$-color bipartite Ramsey number of $P_4$, denoted $f(r)$, which is the smallest integer $\ell$ such that in every $r$-coloring of the edges of $K_{\ell,\ell}$ there is a monochromatic path on four vertices.  Furthermore, combined with earlier results, we determine $f(r)$ for every value of $r$.
\end{abstract}

\section{Introduction, results}
Let $P_m, C_m$ denote the path and the cycle on $m$ vertices respectively.
There are many results about monochromatic connected components of edge colored graphs and hypergraphs. Here we just refer to surveys \cite{GYSUR}, \cite{GYSUR2} and  \cite{KL}. This note is about the case when the colored host graph is a complete bipartite graph (see \cite[Section 3.1]{GYSUR}).  We start with the following result about the size of the largest monochromatic connected component (for brevity referred here as a monochromatic component).

\begin{theorem}[\cite{GY1}]\label{gy} In every $r$-coloring of the edges of $K_{m,n}$ there is a monochromatic component with at least $\frac{m+n}{r}$ vertices.
\end{theorem}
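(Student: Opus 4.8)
The plan is to prove that in every $r$-coloring of $K_{m,n}$, there is a monochromatic component with at least $\frac{m+n}{r}$ vertices. The cleanest approach uses a well-known connection between monochromatic components and covers: a monochromatic component is "small" in all colors precisely when the colors can be arranged so that no single component dominates. I would phrase this via the bipartite double cover structure and the hypergraph of color classes.

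First I would set up the key tool. For each color $i \in \{1,\dots,r\}$, consider the spanning subgraph $G_i$ of $K_{m,n}$ consisting of the edges colored $i$. The vertex set is $A \cup B$ with $|A| = m$, $|B| = n$, and $G_1 \cup \dots \cup G_r = K_{m,n}$. The crucial observation is that for any vertex $a \in A$ and $b \in B$, the edge $ab$ lies in some color $i$, so $a$ and $b$ are in the same component of $G_i$. I would exploit this to build a "cover" argument: if every monochromatic component were small, I could derive a contradiction by a double-counting or averaging estimate.

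Next I would carry out the averaging. For each color $i$, let the components of $G_i$ restricted to the bipartite structure be $C_{i,1}, C_{i,2}, \dots$. Each component $C_{i,j}$ meets $A$ in some set and $B$ in some set; since there are no edges of color $i$ between different components, the product structure is constrained. The key combinatorial fact is that assigning to each pair $(a,b) \in A \times B$ the index of a monochromatic component containing both (which exists by the previous paragraph) gives a cover of the complete bipartite "grid" $A \times B$ by the "boxes" $(C_{i,j} \cap A) \times (C_{i,j} \cap B)$. A standard projective-plane / fractional-cover bound then shows that some box must be large. Concretely, if every component had fewer than $\frac{m+n}{r}$ vertices, the total coverage of the $mn$ pairs by at most $r$ colors' worth of boxes would fall short, since each color's components partition $A$ and $B$ and hence its boxes cover at most a fraction of the grid governed by the sizes $|C_{i,j} \cap A| + |C_{i,j} \cap B|$.

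The main obstacle I anticipate is turning the qualitative covering picture into the exact bound $\frac{m+n}{r}$ rather than a weaker constant. The delicate point is that within a single color the components partition both $A$ and $B$, so one must argue that the "worst case" is a single large component per color, and then the $mn$ pairs force $\sum_i (\text{size of largest box in color } i)$ to be large enough that one box has a side summing to at least $\frac{m+n}{r}$. I would handle this by a convexity/extremal argument: subject to covering all $mn$ pairs with $r$ partitions, the configuration minimizing the maximum component size is the balanced one, which yields exactly $\frac{m+n}{r}$. Verifying that this extremal configuration is attained (or that any deviation only helps) is the technical heart, and I would expect to lean on an affine-plane-type construction to show the bound is tight, mirroring the sharpness discussion implicit in the surveys cited.
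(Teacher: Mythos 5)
The paper does not prove Theorem~\ref{gy} at all; it is quoted from \cite{GY1}. So your proposal must be judged on its own, and there it has a genuine gap. Your setup is fine and is the start of a known argument: for each color $i$ the components yield ``boxes'' $(C_{i,j}\cap A)\times(C_{i,j}\cap B)$ with pairwise disjoint projections within a color, and these boxes cover all of $A\times B$. But the entire quantitative content of the theorem sits in the step you call a ``standard projective-plane / fractional-cover bound'' and later a ``convexity/extremal argument,'' and you never carry it out. It is not routine: writing $a_j=|C_{i,j}\cap A|$, $b_j=|C_{i,j}\cap B|$ and assuming $a_j+b_j<s:=\frac{m+n}{r}$ for every box, the obvious estimate gives per-color coverage
\[
\sum_j a_jb_j\;\le\;\sum_j \frac{(a_j+b_j)^2}{4}\;<\;\frac{s}{4}\sum_j (a_j+b_j)\;=\;\frac{s(m+n)}{4},
\]
so the total coverage is less than $rs(m+n)/4=(m+n)^2/4$. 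Since $(m+n)^2/4\ge mn$ for \emph{all} $m,n$, this yields no contradiction; it works only in the balanced case $m=n$. Thus, as written, your argument proves the theorem only for $m=n$, while the unbalanced case is exactly what the paper uses (e.g.\ in the proof of Theorem~\ref{boththird}). Moreover, your guess about the extremal structure is wrong: the worst case is not ``a single large component per color'' but $r$ components per color (blow up a $1$-factorization of $K_{r,r}$), so an argument routed through ``the largest box in each color'' cannot be the right reduction.

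The gap is fixable inside your framework, but it requires a specific inequality rather than generic convexity. If every component of color $i$ satisfies $a_j+b_j\le s$, and $\sum_j a_j\le m$, $\sum_j b_j\le n$, then
\[
\sum_j a_jb_j\;=\;\sum_j \frac{a_jb_j}{a_j+b_j}\,(a_j+b_j)\;\le\; s\sum_j \frac{a_jb_j}{a_j+b_j}\;\le\; s\cdot\frac{mn}{m+n},
\]
where the last step holds because $g(a,b)=\frac{ab}{a+b}$ is concave and homogeneous of degree one on the positive orthant, hence superadditive (and monotone), so $\sum_j g(a_j,b_j)\le g\bigl(\sum_j a_j,\sum_j b_j\bigr)\le g(m,n)$. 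Summing over the $r$ colors, if every monochromatic component had fewer than $\frac{m+n}{r}$ vertices, the boxes would cover fewer than $r\cdot\frac{m+n}{r}\cdot\frac{mn}{m+n}=mn$ pairs, a contradiction. Alternatively, you could drop the covering framework entirely: the shortest proof averages double stars. For an edge $ab$ of color $c$, the monochromatic double star at $ab$ has $d_c(a)+d_c(b)$ vertices; summing over all $mn$ edges and applying Cauchy--Schwarz at each vertex ($\sum_i d_i(a)^2\ge n^2/r$ for $a\in A$, and $\sum_i d_i(b)^2\ge m^2/r$ for $b\in B$) shows the average double star has at least $\frac{m+n}{r}$ vertices. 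This is precisely the strengthening of Mubayi \cite{MU} and Liu, Morris, and Prince \cite{LMP} that the paper mentions immediately after Theorem~\ref{gy}, and it avoids your extremal problem altogether.
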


Mubayi \cite{MU} and Liu, Morris, and Prince \cite{LMP} obtained independently a stronger result: one can require that the monochromatic component in Theorem \ref{gy} is a {\em double star} (a tree obtained by joining the centers of two disjoint stars by an edge).
A slight possible improvement of Theorem \ref{gy} was conjectured in \cite{BGY}: the size of the  monochromatic component could be at least $\lceil {m\over r}\rceil + \lceil {n\over r}\rceil$.

Here we address another natural possible improvement of Theorem \ref{gy}, asking whether the large component can be balanced.

\begin{question}\label{que} Is it true that in every $r$-coloring of the edges of $K_{m,n}$ there is a monochromatic component that intersects the partite classes in at least $m/r$ and $n/r$ vertices, respectively?
\end{question}

For the diagonal case, $m=n$, Question \ref{que} has been studied in stronger forms.  The most important examples are connected matchings (a matching whose edges are in the same connected component), even paths, and cycles. For $r=2$ an affirmative answer to Question \ref{que} has been known in its strongest form for more than forty years: Gy\'arf\'as and Lehel \cite{GYL} and independently Faudree and Schelp \cite{FS2} proved that any $2$-colored $K_{n,n}$ contains a monochromatic $P_n$. For $r=3$ an affirmative answer was recently provided by Buci\'c, Letzter and Sudakov \cite{BLS} (In fact, the authors of this note independently proved the same result, but with a less elegant proof).
\begin{theorem}[\cite{BLS}]\label{main} In every $3$-coloring of the edges of $K_{n,n}$
there is a monochromatic connected matching of size $\lceil n/3 \rceil$.
\end{theorem}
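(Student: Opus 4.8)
The plan is to pass from connected matchings to vertex covers via König's theorem and argue by contradiction. A monochromatic connected matching of size $k$ in color $i$ is exactly a connected component $C$ of the color-$i$ subgraph $G_i$ with $\nu(C)\ge k$, where $\nu$ denotes the maximum matching; by König's theorem this is the same as $C$ having minimum vertex cover $\tau(C)=\nu(C)\ge k$. So suppose, for contradiction, that every monochromatic component $C$, in every color, satisfies $\tau(C)\le \lceil n/3\rceil-1$.

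The clean first step is a covering lemma that treats the three colors symmetrically. For each color $i$ let $S_i$ be a minimum vertex cover of $G_i$; since the components of $G_i$ are vertex-disjoint, $S_i$ is the disjoint union of their minimum covers and $|S_i|=\nu(G_i)$. I claim that $X\subseteq S_1\cup S_2\cup S_3$ or $Y\subseteq S_1\cup S_2\cup S_3$: if some $x\in X$ and some $y\in Y$ both avoided $S_1\cup S_2\cup S_3$, then the edge $xy$, say of color $c$, would be uncovered by $S_c$, contradicting that $S_c$ covers $G_c$; since $K_{n,n}$ is complete this forces one whole side into the union. Hence $\nu(G_1)+\nu(G_2)+\nu(G_3)=|S_1|+|S_2|+|S_3|\ge n$, and by averaging some color $i$ has $\nu(G_i)\ge \lceil n/3\rceil$, i.e. a monochromatic matching of the required size.

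The crux, and the main obstacle, is to upgrade this to a \emph{connected} matching, since $\nu(G_i)$ may be realized by a matching split over many components, each of which we are assuming has $\tau(C)\le\lceil n/3\rceil-1$. The completeness of $K_{n,n}$ must now be used essentially: the gap is precisely the difference between ``many disjoint stars'' and ``a long path'', and a color all of whose components are stars contains no monochromatic $P_4$ at all, so the base obstruction is exactly the bipartite path-Ramsey threshold $f(3)$ that this paper determines. To bridge the gap I would run a stability argument anchored at the extremal configuration, the blow-up of an order-$3$ Latin square: partition each side into three equal parts and color the nine blocks by a Latin square, so each color has three components, each a balanced $K_{n/3,n/3}$ with connected matching exactly $n/3$. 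The natural but lossy move is to take the largest monochromatic component $C$ (of size at least $2n/3$ by Theorem \ref{gy}), delete a minimum vertex cover of $C$ to expose a complete bipartite graph colored only by the other two colors, and apply the two-color connected-matching bound of \cite{GYL,FS2}; the trouble is that this reduction costs both a factor of two and the deleted cover, so it only yields about $n/6$. The real work, therefore, is to keep all three colors in play simultaneously—rather than peeling one off—and to show that any coloring with all connected matchings below $\lceil n/3\rceil$ must be so close to the Latin-square blow-up that the strict inequality $\tau(C)\le\lceil n/3\rceil-1$ becomes untenable. Controlling these cover-losses so that the sharp $1/3$ proportion survives is where the difficulty concentrates.
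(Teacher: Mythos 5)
Your proposal is not a complete proof: it establishes only that some color class $G_i$ has $\nu(G_i)\ge\lceil n/3\rceil$, i.e.\ a monochromatic matching that may be scattered over many components, and then explicitly concedes that the upgrade to a \emph{connected} matching --- which is the entire content of the theorem --- is left undone. The K\H{o}nig/covering step is correct but cheap: note that it never actually uses your contradiction hypothesis $\tau(C)\le\lceil n/3\rceil-1$ on individual components, and it cannot be pushed to a contradiction at this level of granularity, since in the extremal Latin-square blow-up every color has $\nu(G_i)=n$ (three disjoint balanced bicliques per color) while the largest connected matching is exactly $n/3$. The concluding paragraph is a research plan (``run a stability argument anchored at the extremal configuration''), not an argument: no stability lemma is stated, no case analysis is carried out, and the one concrete reduction you do describe --- peel off the largest component, delete its cover, apply the two-color result of \cite{GYL,FS2} --- is correctly diagnosed by you as degrading to roughly $n/6$. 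So the gap is precisely the theorem itself.

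For calibration: the paper under review does not prove this statement either; it quotes it from Buci\'c, Letzter and Sudakov \cite{BLS} (the authors remark that they found an independent, ``less elegant'' proof, which they do not include). The proof in \cite{BLS} is a genuinely involved structural analysis of how monochromatic components in the three colors can meet the two sides, not a short counting argument, which is consistent with your own correct assessment of where the difficulty concentrates. What this paper does prove with a short argument is the weaker Theorem \ref{boththird}: some monochromatic component meets each side in at least a $1/3$ proportion --- a large \emph{balanced component}, not a connected matching. Your covering lemma is closer in spirit to that statement; if you want a target completable by elementary means, that is the one to aim for, whereas the connected-matching version requires substantially more machinery.
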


The significance of connected matchings is that with the connected matching-Regularity Lemma method established by {\L}uczak \cite{L}, it is possible to transfer results on connected matchings to asymptotic results for paths and even cycles. For example, this method is used to transfer Theorem \ref{main} to asymptotic results for even cycles and paths in \cite{BLS}.  For similar applications see for example \cite{BS}, \cite{FL}, \cite{GRSS}, \cite{KSS}, \cite{LSS}.

In this note we provide a short proof which answers Question \ref{que} in the affirmative for the non-diagonal case when $1\leq r\leq 3$.

\begin{theorem} \label{boththird} Let $1\le r \le 3$.  In every $r$-coloring of the edges of $K_{m,n}$ there is a monochromatic component that intersects the partite classes in at least $m/r$ and $n/r$ vertices, respectively.
\end{theorem}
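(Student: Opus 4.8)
The plan is to handle the three values of $r$ separately, reducing $r=3$ to $r=2$, which itself rests on an explicit structural dichotomy. Fix the partite classes $X$ (of size $m$) and $Y$ (of size $n$). For $r=1$ the whole graph is a single monochromatic component, meeting $X$ in $m$ and $Y$ in $n$ vertices, so there is nothing to prove.

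For $r=2$ I would first record the standard structural dichotomy for a red/blue coloring of $K_{m,n}$: either one color class is connected and spanning, or (after relabeling) there are partitions $X=X_1\sqcup X_2$ and $Y=Y_1\sqcup Y_2$ into nonempty parts with all $X_1Y_1,X_2Y_2$ edges red and all $X_1Y_2,X_2Y_1$ edges blue. This is quick: a non-spanning color cannot have three or more components, since the cross-color edges between distinct components would make the other color connected and spanning; and with exactly two red components, a blue edge inside a component would again connect the two blue sides and make blue span, so all within-component edges are red, which is the $C_4$-blowup. Given the dichotomy the $r=2$ statement is immediate: a spanning color meets each side fully, while in the blowup the four monochromatic components are exactly the sets $X_i\cup Y_j$, so choosing $i$ with $|X_i|\ge m/2$ and $j$ with $|Y_j|\ge n/2$ gives one component meeting $X$ in $\ge m/2$ and $Y$ in $\ge n/2$.

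For $r=3$ the main idea is to single out a color, say red, and apply the $r=2$ dichotomy to the coloring ``red versus non-red (green-or-blue).'' If red spans we are done. If we land in the $C_4$-blowup, the red diagonal block $X_i\cup Y_i$ is already balanced unless (after relabeling) $|X_1|<m/3$ and $|Y_2|<n/3$; in that corner-concentrated case the opposite corner has $|X_2|>2m/3$ and $|Y_1|>2n/3$ and is entirely green/blue, so applying the already-proved $r=2$ result to this $2$-colored sub-$K_{|X_2|,|Y_1|}$ yields a green or blue component meeting $X_2$ in $\ge|X_2|/2>m/3$ and $Y_1$ in $\ge|Y_1|/2>n/3$ (which only grows inside the full graph). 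So the only surviving possibility for each of the three merges is that the two unchosen colors jointly span, and the obstacle reduces to the single configuration in which every pair of colors spans while no single color does (the Latin-square coloring of $K_{3,3}$ is the prototype).

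The main obstacle is exactly this all-pairs-spanning case, which I expect would otherwise require a direct analysis of how the red, green, and blue component partitions interlock. The clean way I would finish (and indeed settle $r=3$ in one stroke) is to invoke Theorem \ref{main}: blow up $K_{m,n}$ to a balanced $K_{N,N}$ with $N$ a common multiple of $m$ and $n$, giving each vertex of $X$ exactly $N/m$ copies and each vertex of $Y$ exactly $N/n$ copies, with edges inheriting their colors. Theorem \ref{main} produces a monochromatic connected matching of size $\lceil N/3\rceil$ inside one monochromatic component $C'$; projecting $C'$ back to $K_{m,n}$ gives a monochromatic component $C$ (connectivity is preserved because copies have identical colored neighborhoods and an edge's two endpoints always project to distinct sides). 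Since the matching uses $\lceil N/3\rceil$ distinct copies on each side while each original has only $N/m$ (resp. $N/n$) copies, $C$ meets $X$ in at least $\lceil N/3\rceil/(N/m)\ge m/3$ vertices and $Y$ in at least $\lceil N/3\rceil/(N/n)\ge n/3$ vertices. The delicate points here are keeping the per-vertex copy counts exactly equal, so the division is lossless, and verifying that projection never collapses an edge; the common-multiple blow-up handles both.
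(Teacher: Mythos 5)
Your $r=2$ argument rests on a structural dichotomy that is false, and this is a genuine gap. Take $K_{m,n}$ with $X=\{x_1,x_2,x_3\}$, $Y=\{y_1,y_2\}$, and color all edges at $x_1$ blue, all edges at $x_3$ red, and $x_2y_1$ red, $x_2y_2$ blue. Neither color class is connected and spanning ($x_1$ is isolated in red, $x_3$ in blue), yet this is not a $C_4$-blowup, even allowing empty parts: the red neighborhoods of $x_1,x_2,x_3$ are $\emptyset$, $\{y_1\}$, $\{y_1,y_2\}$, three distinct sets, whereas in a blowup every red neighborhood of an $X$-vertex must equal $Y_1$ or $Y_2$. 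Both steps of your justification break on such configurations: here red has exactly two components ($\{x_2,x_3,y_1,y_2\}$ and the isolated $x_1$) and there \emph{is} a blue edge $x_2y_2$ inside the big red component, yet blue does not span; and for your first step, the coloring of $K_{3,3}$ in which $x_3$ sees only red and $x_1,x_2$ see only blue gives a color with three components (two of them isolated vertices) while the other color still fails to span. The culprit in all cases is vertices incident to edges of only one color, which your case analysis never accounts for (however one chooses to count isolated vertices as components). Since your $r=2$ case rests entirely on the dichotomy, and your $r=3$ reduction in turn invokes the $r=2$ case, the proposal as written does not establish the theorem for $r=2$.

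What does survive is your closing blow-up argument, and it is worth noting that it is correct and self-contained: the common-multiple blow-up is a legitimate $3$-coloring of $K_{N,N}$, a red component of the blow-up projects into a single red component of $K_{m,n}$ (red walks project to red walks, and no edge collapses), and the count $\lceil N/3\rceil/(N/m)\ge m/3$ is right. So, as you observe, this settles $r=3$ in one stroke \emph{without} any of the preceding case analysis --- but at the cost of invoking the heavy Theorem \ref{main} of Buci\'c--Letzter--Sudakov, where the paper's proof is elementary: it uses only Theorem \ref{gy}. For $r=2$ the paper's argument is three lines and avoids all structure theory: take a monochromatic component $A_1$ with at least $(m+n)/2$ vertices; if it meets $Y$ in fewer than $n/2$ vertices then it meets $X$ in more than $m/2$ vertices, and every edge between $A_1\cap X$ and $Y\setminus A_1$ must carry the other color (otherwise its endpoint in $Y$ would lie in $A_1$), so that biclique is a connected monochromatic subgraph of the required sizes. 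Alternatively, you could repair $r=2$ inside your own framework by running your blow-up with an \emph{even} common multiple $N$ and the Gy\'arf\'as--Lehel/Faudree--Schelp theorem: a $2$-colored $K_{N,N}$ contains a monochromatic $P_N$, which meets each side in exactly $N/2$ vertices and projects to at least $m/2$ and $n/2$ originals. Either fix closes the gap; as written, the $r=2$ case (and hence the theorem as stated) is not proved.
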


However, somewhat surprisingly, we provide a construction which answers Question \ref{que} negatively for all $r\geq 4$.  For instance, there are $4$-colorings of $K_{n,n}$ where the largest balanced monochromatic component has $n/5$ vertices in both partite classes (instead of $n/4$).  Our constructions are based on the $r$-color bipartite Ramsey number of $P_4$, denoted $f(r)$, defined as the smallest integer $\ell$ such that in every $r$-coloring of the edges of $K_{\ell,\ell}$ there is a monochromatic $P_4$.  While for complete host graphs, multicolor Ramsey numbers of $P_4$ have been determined (see \cite[Section 6.4.2]{RAD}) their bipartite analogue has seemingly not been studied explicitly.  However, it turns out that the multicolor bipartite Ramsey number of $P_4$ is equivalent to a well studied graph parameter, the {\em star arboricity} of a graph $G$, denoted $st(G)$, defined to be the minimum number of star forests\footnote{We define a \emph{star} to be a tree having at most one vertex of degree greater than one (this includes isolated vertices and single edges) and a \emph{star forest} to be a forest in which each component is a star.} needed to partition the edge set of $G$.  Then, since a bipartite graph is $P_4$-free if and only if it is a star forest, we get

\begin{observation} $f(r)-1$ is the largest $n$ for which $st(K_{n,n})=r$.
\end{observation}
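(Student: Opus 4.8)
The plan is to translate the definition of $f(r)$ directly into a statement about star arboricity and then to control how $st(K_{n,n})$ grows with $n$. First I would record the basic equivalence. An $r$-coloring of $E(K_{n,n})$ is the same thing as a partition of $E(K_{n,n})$ into $r$ (possibly empty) classes, and such a coloring has no monochromatic $P_4$ if and only if every color class is $P_4$-free, i.e.\ (since the host is bipartite) if and only if every color class is a star forest. Hence an $r$-coloring of $K_{n,n}$ with no monochromatic $P_4$ exists if and only if $E(K_{n,n})$ can be partitioned into at most $r$ star forests, i.e.\ if and only if $st(K_{n,n})\le r$. Because $f(r)$ is by definition the least $\ell$ for which \emph{every} $r$-coloring of $K_{\ell,\ell}$ contains a monochromatic $P_4$, this says exactly that $f(r)$ is the least $\ell$ with $st(K_{\ell,\ell})>r$; equivalently,
\[
st(K_{n,n})\le r \iff n\le f(r)-1 .
\]

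Two monotonicity facts then finish the argument, of which the second is the only real content. First, $st(K_{n,n})\le st(K_{n+1,n+1})$: restricting a star-forest partition of $K_{n+1,n+1}$ to the subgraph $K_{n,n}$ obtained by deleting one vertex from each side keeps every class a star forest. Second, and this is the step I expect to require an explicit construction, $st(K_{n+1,n+1})\le st(K_{n,n})+1$. To see this, write the two sides of $K_{n+1,n+1}$ as $A\cup\{a\}$ and $B\cup\{b\}$ with $|A|=|B|=n$, and take a partition of $K_{n,n}=(A,B)$ into $k=st(K_{n,n})$ star forests $F_1,\dots,F_k$. Form one new class $F_{k+1}$ consisting of the star centered at $a$ with leaf set $B$ together with the star centered at $b$ with leaf set $A$; these two stars are vertex-disjoint, so $F_{k+1}$ is a star forest, and it covers every new edge except $ab$. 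Finally append the single edge $ab$ to $F_1$, which remains a star forest since $a,b$ lie outside $A\cup B$ and so form an isolated edge there. This exhibits a partition of $E(K_{n+1,n+1})$ into $k+1$ star forests.

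Combining these, I would conclude as follows. From the equivalence, $st(K_{f(r)-1,f(r)-1})\le r$ while $st(K_{f(r),f(r)})\ge r+1$; the one-step bound $st(K_{f(r),f(r)})\le st(K_{f(r)-1,f(r)-1})+1$ then forces $st(K_{f(r)-1,f(r)-1})\ge r$, hence $st(K_{f(r)-1,f(r)-1})=r$. On the other hand, monotonicity gives $st(K_{n,n})\ge st(K_{f(r),f(r)})\ge r+1$ for every $n\ge f(r)$, so no $n\ge f(r)$ has $st(K_{n,n})=r$. Therefore $f(r)-1$ is the largest $n$ with $st(K_{n,n})=r$, as claimed. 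The only delicate point is verifying that the added class $F_{k+1}$ is genuinely a star forest (vertex-disjointness of the two new stars) and that the leftover edge can be absorbed without creating a $P_4$; the rest is bookkeeping with the definitions.
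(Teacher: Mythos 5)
Your proof is correct, and its core is the same translation the paper itself gives: since a bipartite graph is $P_4$-free exactly when it is a star forest, $P_4$-free $r$-colorings of $K_{n,n}$ are precisely partitions of $E(K_{n,n})$ into at most $r$ star forests, so $f(r)$ is the least $\ell$ with $st(K_{\ell,\ell})>r$. The paper stops there and states the Observation with no further argument. What you add---correctly identifying it as the only real content---is the attainment question: the statement asserts that $st(K_{f(r)-1,f(r)-1})$ \emph{equals} $r$, not merely that it is at most $r$, and a priori the sequence $st(K_{n,n})$ could skip the value $r$ entirely, in which case the largest $n$ with $st(K_{n,n})=r$ would not exist. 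Your two lemmas settle this: restriction gives $st(K_{n,n})\le st(K_{n+1,n+1})$, which is also what upgrades the defining property at $\ell=f(r)$ to all $n\ge f(r)$; and your explicit construction gives $st(K_{n+1,n+1})\le st(K_{n,n})+1$, which forces $st(K_{f(r)-1,f(r)-1})\ge r$. The construction itself is sound: the two new stars centered at $a$ and $b$ are vertex-disjoint, and the leftover edge $ab$ is rightly absorbed into an old class as an isolated component---placing it with the new stars would create a $P_4$ through $a$ and $b$, so this small point of care is necessary. In short, your write-up is a complete, self-contained proof of something the paper treats as immediate; the attainment issue you handle with the one-step bound would otherwise only follow a posteriori once Theorem~\ref{fr} determines $f(r)$ for every $r$.
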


\begin{theorem}\label{fr} $f(1)=2,f(2)=3,f(3)=4,f(4)=6$ and for $r\ge 5$, $f(r)=2r-3$.
\end{theorem}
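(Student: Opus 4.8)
The plan is to use the Observation to convert everything into star arboricity: a monochromatic $P_4$-free $r$-coloring of $K_{\ell,\ell}$ is exactly a decomposition of $E(K_{\ell,\ell})$ into $r$ star forests, so proving $f(r)=2r-3$ for $r\ge 5$ amounts to a construction showing $st(K_{\ell,\ell})\le r$ for $\ell=2r-4$ (giving $f(r)\ge 2r-3$) together with a lower bound showing $st(K_{\ell,\ell})\ge r+1$ for $\ell=2r-3$ (giving $f(r)\le 2r-3$), plus the small cases $r\le 4$. I would organize the write-up around these two inequalities.

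For $f(r)\ge 2r-3$ I would exhibit, for each $r\ge 5$, an explicit $r$-coloring of $K_{2r-4,2r-4}$ with no monochromatic $P_4$, i.e.\ a decomposition of $K_{n,n}$ with $n=2r-4=2(r-2)$ into $r=\tfrac n2+2$ star forests. Since each star forest of $K_{n,n}$ carries at most $2n-2$ edges, the target is only slightly above the edge-count minimum, so I would build almost every color class as a near-spanning double star (a vertex of $A$ joined to most of $B$ together with a vertex of $B$ joined to most of $A$, omitting the central edge, for $2n-2$ edges), selecting the centers by a translation/round-robin scheme on $\mathbb{Z}_n$ so that each edge is covered exactly once, and absorbing the few leftover edges into one or two extra star forests. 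Each class is a disjoint union of two stars, hence visibly $P_4$-free; verifying that the scheme is a genuine partition is a routine finite check. The small cases $r\in\{1,2,3,4\}$ (e.g.\ the $n$ stars centered on one side for $K_{3,3}$, and an ad hoc decomposition of $K_{5,5}$ into four star forests) I would simply write out by hand.

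For $f(r)\le 2r-3$ I must show every $r$-coloring of $K_{n,n}$ with $n=2r-3$ contains a monochromatic $P_4$, i.e.\ $st(K_{n,n})\ge r+1$, and here the heart is a single structural lemma. Call $v$ a \emph{center} in color $i$ if $v$ is the hub of a nontrivial star of that color; since $r<n$, a degree count forces every vertex to be a center in at least one color (a vertex that is always a leaf covers at most $r<n$ edges). The lemma is: if a color $i$ has a center $a\in A$ and a center $b\in B$, then since these are hubs of two distinct components the edge $ab$ is absent from color $i$, so $ab$ lies in some color $j\neq i$, where one of $a,b$ is the hub of its star; hence that vertex is a center in at least two colors. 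Writing $p_i,q_i$ for the numbers of centers of color $i$ in $A,B$, the capacity bound $e_i\le 2n-p_i-q_i$ sums to $\sum_i(p_i+q_i)\le 2nr-n^2$. If no vertex were a center twice, the lemma would forbid any color with centers on both sides, every color would be one-sided with at most $n$ edges, and $n^2=\sum_i e_i\le rn$ would force $r\ge n$, a contradiction. For even $n$ the edge count with $r=\tfrac n2+1$ forces $\sum_i(p_i+q_i)=2n$, hence exactly this no-repeat situation, and $st\ge\tfrac n2+2$ drops out at once.

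The main obstacle is the odd case $n=2r-3$, which is exactly the one needed: the edge count no longer forces equality, repeated centers are permitted, and a delicate accounting is required to gain the extra $+1$ over the naive edge bound. The plan is to balance two competing costs quantitatively. On one hand, an edge-counting inequality forces a lower bound on the number $s$ of two-sided colors (since one-sided colors hold only $n$ edges each). On the other hand, by the lemma every two-sided color must spend a \emph{heavy} center (multiplicity $\ge 2$) on its otherwise-light side, while any light vertex parked as a center only in a one-sided color must carry a large star there, against the mere $n$ units of leaf-capacity per one-sided color. Weighing the forced number of two-sided colors against the number of heavy centers available (itself bounded above by the edge deficit $2nr-n^2-2n$) should yield $r\ge\lceil n/2\rceil+2$; alternatively, this final step can be supplied by the known value of $st(K_{n,n})$ from earlier results. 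I expect this odd-case balancing to be the crux of the whole argument, the constructions and the clean structural lemma being comparatively routine.
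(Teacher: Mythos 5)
Your framing via star arboricity and your even-$n$ counting are fine: the capacity bound $e_i\le 2n-p_i-q_i$, the observation that every vertex must be a designated center of some color, and the two-sided-color lemma really do give $st(K_{n,n})\ge n/2+2$ for even $n$, hence $f(r)\le 2r-2$ and in particular $f(4)\le 6$. But both halves of the statement for $r\ge 5$ have genuine gaps. On the upper bound, the case you defer --- odd $n=2r-3$ --- is the only case the theorem needs, and your sketch does not close it. With $r=(n+3)/2$ the capacity bound gives only $\sum_v d_v\le 3n$ against the trivial $\sum_v d_v\ge 2n$, so there are up to $n$ ``extra'' center slots; your own accounting forces at least $r-2$ two-sided colors, but a single heavy vertex of multiplicity $d$ can serve as the heavy-side center of $d$ different two-sided colors at a cost of only $d-1$ extra slots, so the two quantities you propose to weigh against each other are numerically compatible and no contradiction follows at this level of counting. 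The fallback of citing the known value of $st(K_{n,n})$ is not available either: by your own Observation that value \emph{is} the statement being proved, and the only prior proof in the literature (Egawa--Fukuda--Nagoya--Urabe) is precisely the one this paper identifies as incomplete, its induction never being given a base case. The missing idea, as the paper executes it, is to bound repeated centers structurally rather than arithmetically: induct on $r$, and note that two special centers $x_1,x_2\in X$ of the same color each have at least $r-2$ neighbors in that color, so the complete bipartite graph between $2r-4$ such neighbors and $X\setminus\{x_1,x_2\}$ is a $P_4$-free $(r-1)$-colored $K_{2r-5,2r-4}$, contradicting the inductive hypothesis; the induction is launched from a separately proved base case that every $4$-coloring of $K_{5,6}$ contains a monochromatic $P_4$ (itself proved via $f(3)=4$). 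Only after this structural step does an edge count of your type finish the proof.

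On the lower bound, the near-spanning double-star scheme cannot exist as described. In a star forest every edge meets the center of its star, so a class consisting of one star centered in $A$ and one centered in $B$ covers only edges incident to its two centers. If $k$ classes have this shape, their union lies in $(S_A\times B)\cup(A\times S_B)$ with $|S_A|,|S_B|\le k$, leaving at least $(n-k)^2$ edges untouched; with $k\ge r-2=n/2$ double-star classes and at most two absorber classes of at most $2n-2$ edges each, you would need $(n/2)^2\le 2(2n-2)$, which fails for all $n\ge 15$, so the approach is wrong in general, not merely incomplete. A correct construction needs four stars per class, two centered on each side: take the Akiyama--Kano / Yongqi--Yuansheng--Feng--Bingxi $(r-1)$-coloring of $K_{2r-4}$, in which each of $r-2$ classes is a union of two vertex-disjoint stars and one class is a perfect matching; replace each vertex $X_i$ by a pair $A_i,B_i$, color both $A_iB_j$ and $A_jB_i$ with the color of $X_iX_j$, and spend the $r$-th color on the perfect matching $\{A_iB_i\}$. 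Each resulting class is a star forest, so the coloring is $P_4$-free. Finally, note that the ``ad hoc'' decomposition of $K_{5,5}$ into four star forests needed for $f(4)\ge 6$ does exist but is unique up to isomorphism, so it will not fall out of any generic scheme; it must be exhibited explicitly, as the paper does.
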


Regarding Theorem \ref{fr}, the cases $r=1,2$ are trivial and the case $r=3$ follows from Theorem \ref{main} (and it easy to prove directly).  While it is not hard to see that $f(4)\le 6$, it took some time (and faith) to find a $4$-coloring of $K_{5,5}$ that does not contain a monochromatic $P_4$.  In fact, a computer search later showed that there is (up to isomorphism) only one such coloring.  In the language of star arboricity, Egawa, Fukuda, Nagoya and Urabe \cite{EFNU} gave a proof of Theorem \ref{fr} when $r\ge 5$.  While their inductive step for the upper bound is nice, they do not address the problem how to launch the induction, i.e.\ they do not prove a base case.  In Section \ref{small}, we correct this oversight by proving Theorem \ref{fr}.  Finally, Section \ref{const} provides the lower bound construction for Theorem \ref{fr}.

Blowing up the Ramsey graphs with $f(r)-1$ vertices we get the following.

\begin{proposition}\label{prop:lbound} Let $r,k$ be positive integers and $n=(f(r)-1)k$. There exists an $r$-coloring of $K_{n,n}$ such that every monochromatic component intersects one of the sides in exactly ${n\over f(r)-1}$ vertices.  In particular, the size of the largest monochromatic connected matching is ${n\over f(r)-1}$ and the largest monochromatic monochromatic even path/cycle has ${2n\over f(r)-1}$ vertices.
\end{proposition}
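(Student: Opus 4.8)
The plan is to take the extremal colorings witnessing the lower bound for $f(r)$ and blow them up. Set $\ell=f(r)-1$. By the definition of $f(r)$ there is an $r$-coloring of $K_{\ell,\ell}$ containing no monochromatic $P_4$, and since (as noted just before the Observation) a bipartite graph is $P_4$-free if and only if it is a star forest, each of the $r$ color classes of this coloring is a star forest. First I would record the one structural fact that drives everything: in a bipartite graph every star has all of its edges incident to a single vertex, its center, so a star has exactly one vertex on the side containing its center, while its leaves (if any) all lie on the opposite side.

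Next I would form the blow-up. Replace each vertex $v$ of $K_{\ell,\ell}$ by an independent set $V$ of $k$ vertices, and replace each edge $uv$ of color $c$ by a complete bipartite graph between $U$ and $V$, all of whose edges receive color $c$. This yields an $r$-coloring of $K_{n,n}$ with $n=\ell k$. The key point to verify is that the blow-up neither merges color classes nor fuses components: two distinct stars of the same color in the base graph are vertex-disjoint and are joined by no edge of that color, so their blow-ups are vertex-disjoint with no edge of that color between them. Hence the (nontrivial) monochromatic components of the blow-up are exactly the blow-ups of the individual stars. By the structural fact above, the blow-up of a star meets the center's side in precisely the $k$ vertices of the center's block, which gives the main assertion that every such component meets one of the sides in exactly $n/\ell=n/(f(r)-1)$ vertices.

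The ``in particular'' statements then follow from elementary bipartite bounds together with matching lower-bound constructions inside a single blown-up edge. A monochromatic connected matching lives inside one monochromatic component $C$, and since $C$ meets one side in exactly $k$ vertices, the matching uses at most $k$ distinct vertices on that side and so has at most $k$ edges; the blow-up of a single base edge is a monochromatic $K_{k,k}$, whose perfect matching attains $k$. Likewise a monochromatic even path or cycle alternates between the two sides and so has equally many vertices on each side, at most $k$, hence at most $2k$ vertices; a Hamiltonian path (and, for $k\ge 2$, a Hamiltonian cycle) of that monochromatic $K_{k,k}$ attains $2k$.

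Since the construction is essentially forced once the extremal $P_4$-free coloring is in hand, there is no real analytic obstacle here; the only point requiring care is the component-tracking under the blow-up, namely checking that no two same-colored stars fuse into a single component and that the lone center of each star is exactly what pins one side of each component to $k$ vertices. The existence of the underlying $P_4$-free coloring of $K_{\ell,\ell}$ is precisely the lower bound for $f(r)$, whose explicit construction is deferred to the proof of Theorem \ref{fr}.
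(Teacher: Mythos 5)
Your proof is correct and takes essentially the same approach as the paper: the paper's (two-sentence) proof is exactly this blow-up of a $P_4$-free $r$-coloring of $K_{f(r)-1,f(r)-1}$, with the coloring inherited from the base edges. Your additional verification --- that same-colored stars do not fuse under blow-up, that the center block pins one side of each component to $k$ vertices, and that the matching/path/cycle bounds are attained inside a blown-up edge --- is detail the paper leaves implicit.
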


\begin{proof} Let $G$ be a complete balance bipartite graph with $f(r)-1$ vertices in each part, colored with $r$ colors so that there is no monochromatic $P_4$. Replacing each vertex by $k$ vertices and using the color of $uv$ for all edges between the sets replacing $u,v$, we have the required coloring.
\end{proof}

Note that ${n\over f(r)-1}=\frac{n}{5}$ for $r=4$, and ${n\over f(r)-1}=\frac{n}{2r-4}$ for $r\geq 5$.
We can get a lower bound on the size of a monochromatic connected matching in an $r$-colored $K_{n,n}$ for $r\geq 4$ by considering the majority color class which has at least $n^2/r$ edges and applying the following Erd\H{o}s-Gallai-type result for bipartite graphs proved by Gy\'arf\'as, Rousseau, and Schelp \cite{GRS}: If $G$ is a balanced bipartite graph on $2n$ vertices with at least $n^2/r$ edges, then $G$ has a path on at least $(1-\sqrt{1-2/r})n$ vertices (which implies a connected matching of size $\frac{1}{2}(1-\sqrt{1-2/r})n$). Note that for $r\geq 4$, a simple calculation shows 
\[
\frac{1}{2r-1}<\frac{1}{2}(1-\sqrt{1-2/r})< \frac{1}{2r-4}.
\]
Improving the bounds for $r\geq 4$ would be very interesting.

\section{Balanced components}\label{proof}

\begin{proof}[Proof of Theorem \ref{boththird}] Assume that $K=K_{m,n}$ has partite classes $X,Y$ with $|X|=m, |Y|=n$ and the three colors are $A,B,C$. The theorem is trivial for $r=1$. Assume that $r=2$.  By Theorem \ref{gy} there is a monochromatic component, say $A_1$ in color $A$ with at least ${m+n\over 2}$ vertices. Set $X_1=A_1\cap X, Y_1=A_1\cap Y$. We may assume that $|Y_1|<n/2$ and $|X_1|>m/2$, otherwise $A_1$ satisfies the claim of the theorem. Now the biclique $[X_1,Y\setminus Y_1]$ is monochromatic in color $B$ and satisfies the claim of the theorem.

For $r=3$ we proceed similarly. By Theorem \ref{gy} there is a monochromatic component, say $A_1$ in color $A$ with at least ${m+n\over 3}$ vertices. Set $X_1=A_1\cap X, Y_1=A_1\cap Y$. We may assume that $|Y_1|<n/3$ and $|X_1|>m/3$, otherwise $A_1$ satisfies the claim of the theorem. The biclique $K_1=[X_1,Y\setminus Y_1]$ is colored with colors $B,C$. Applying the $r=2$ case to $K_1$, we get a monochromatic component, say $B_1$ in color $B$ such that $|B_1\cap (Y\setminus Y_1)|>{1\over 2}{2n\over 3}=n/3$. The set $X_1\setminus B_1$ is nonempty, otherwise $|X_1\cap B_1|=|X_1|>m/3$ and $B_1$ satisfies the claim of the theorem.

Note that the biclique $[X_1\setminus B_1 ,B_1\cap (Y\setminus Y_1)]$ is monochromatic in color $C$, determining a monochromatic component $C_1$ in $K_1$.  We extend the components $B_1,C_1$ to components $B_1^*,C_1^*$ of $K$, by using all edges of color $B$ and $C$ that go from $B_1\cap (Y\setminus Y_1)$ to $X\setminus X_1$. If $|X\cap B_1^*|$ or $|X\cap C_1^*|$ is at least $m/3$, the component $B_1^*$ or $C_1^*$ satisfies the claim of the theorem. Otherwise $|B_1^*\cup C_1^*|<2m/3$ and the biclique $[X\setminus (B_1^*\cup C_1^*), B_1\cap (Y\setminus Y_1)]$ is monochromatic in color $A$ with at least $n/3,m/3$ vertices in its partite classes, giving the desired monochromatic component.
\end{proof}

Colorings of $K_{m,n}$ where all monochromatic components are complete bipartite graphs are called {\em bi-equivalence colorings}. It was conjectured by Gy\'arf\'as and Lehel \cite{GY1} that bi-eqivalence $r$-colorings of complete bipartite graphs have vertex coverings by at most $2r-2$ monochromatic components. This was proved for $r\le 5$ in \cite{CGyLT}. Applying Theorem \ref{boththird} to bi-eqivalence $3$-colorings, we get the following corollary.

\begin{corollary} \label{largebiclique} In every bi-eqivalence $3$-coloring of $K_{m,n}$ there exists a monochromatic $K_{\lceil m/3\rceil, \lceil n/3\rceil}$.
\end{corollary}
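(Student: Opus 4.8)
The plan is to derive this directly from Theorem \ref{boththird}, exploiting the defining property of a bi-equivalence coloring: every monochromatic component is itself a complete bipartite graph. Let $X,Y$ be the partite classes of $K_{m,n}$ with $|X|=m$ and $|Y|=n$.

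First I would apply Theorem \ref{boththird} in the case $r=3$ to the given $3$-coloring, obtaining a monochromatic component $M$ that meets $X$ in at least $m/3$ vertices and $Y$ in at least $n/3$ vertices. Since $|M\cap X|$ and $|M\cap Y|$ are integers, the inequalities $|M\cap X|\ge m/3$ and $|M\cap Y|\ge n/3$ immediately upgrade to $|M\cap X|\ge \lceil m/3\rceil$ and $|M\cap Y|\ge \lceil n/3\rceil$.

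The final step is where the bi-equivalence hypothesis enters: by assumption $M$ is a complete bipartite graph, so every edge between $M\cap X$ and $M\cap Y$ is present and carries the color of $M$. Choosing any $\lceil m/3\rceil$ vertices from $M\cap X$ and any $\lceil n/3\rceil$ vertices from $M\cap Y$ therefore spans a monochromatic $K_{\lceil m/3\rceil,\lceil n/3\rceil}$, as required.

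There is no serious obstacle here; the corollary is essentially immediate once Theorem \ref{boththird} is available. The only point deserving care is the recognition that the component produced by Theorem \ref{boththird} need \emph{not} be further argued to contain a large biclique. In an arbitrary $3$-coloring a large balanced monochromatic component could be sparse (e.g.\ a double star), so having many vertices on each side would not by itself guarantee a complete bipartite subgraph; it is precisely the bi-equivalence assumption that forces the component to be complete bipartite, so that the component itself already furnishes the desired monochromatic $K_{\lceil m/3\rceil,\lceil n/3\rceil}$.
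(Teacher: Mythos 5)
Your proof is correct and is exactly the paper's argument: the paper derives Corollary \ref{largebiclique} by applying Theorem \ref{boththird} with $r=3$ and noting that in a bi-equivalence coloring the resulting component is itself a complete bipartite graph, with the ceiling bounds following from integrality just as you state.
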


Corollary \ref{largebiclique} is sharp.  Let $m_1\geq m_2\geq m_3$ be integers, as equal as possible, such that $m_1+m_2+m_3=m$, and likewise for $n_1\geq n_2 \geq n_3$.  Consider the unique 1-factorization of $K_{3,3}$ coloring the edges of each matching with a different color, then blow-up the vertices of $K_{3,3}$ into vertex sets of sizes $m_1, m_2, m_3$ and $n_1, n_2, n_3$ respectively, extending the coloring in the natural way. In the resulting coloring, the largest monchromatic complete bipartite graph has $m_1=\lceil m/3\rceil$ vertices on one side and $n_1=\lceil n/3\rceil$ vertices on the other.

\section{The Ramsey number $f(4)$ and $f(r)\leq 2r-3$ for $r\geq 5$}\label{small}

\begin{proposition}\label{f4} $f(4)=6$, i.e. $st(K_{5,5})=4$.
\end{proposition}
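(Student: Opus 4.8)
The plan is to establish the two bounds $f(4)\le 6$ and $f(4)\ge 6$ separately, working throughout with the star-arboricity reformulation recorded in the Observation: a $4$-coloring of $K_{\ell,\ell}$ with no monochromatic $P_4$ is exactly a partition of $E(K_{\ell,\ell})$ into four star forests. The lower bound $f(4)\ge 6$ amounts to exhibiting such a partition of $K_{5,5}$ (i.e.\ $st(K_{5,5})\le 4$), and the upper bound $f(4)\le 6$ amounts to showing that no such partition of $K_{6,6}$ exists (i.e.\ $st(K_{6,6})\ge 5$). Together with $st(K_{5,5})\ge 4$, which follows from $f(3)=4$ (Theorem \ref{main}) since $K_{5,5}\supseteq K_{4,4}$, these yield $st(K_{5,5})=4$.

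For the upper bound I would argue by contradiction: suppose the $36$ edges of $K_{6,6}$ split into star forests $F_1,\dots,F_4$, and let $e_i$ be the number of edges and $k_i$ the number of nontrivial stars of $F_i$. Counting vertices star by star gives $e_i+k_i\le 12$, and summing over $i$ together with $\sum_i e_i=36$ forces $\sum_i k_i\le 12$. Now I double-count high-degree vertices: since every vertex has total degree $6$ across the four forests, each vertex has $F_i$-degree at least $2$ for some color $i$, and such a vertex must be the center of a star of $F_i$. Hence the number of pairs $(v,i)$ with $\deg_{F_i}(v)\ge 2$ is at least $12$ (one per vertex) but at most $\sum_i k_i\le 12$. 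Equality throughout pins down the structure completely: there are no isolated vertices and no single-edge stars, each vertex is a center in exactly one color and a leaf in the other three, and consequently every forest is a vertex-spanning union of three claws $K_{1,3}$ with $k_i=3$ and $e_i=9$. The contradiction is then a parity count inside a single forest: if $a_i$ of its three claws are centered in $X$, then covering all six vertices of $X$ requires $a_i+3(3-a_i)=6$, i.e.\ $a_i=3/2$, which is impossible.

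For the lower bound I need an explicit decomposition of the $25$ edges of $K_{5,5}$ into four star forests; this is the genuinely delicate step, since the relevant counting leaves almost no slack and, as a computer search confirms, essentially one such coloring exists up to isomorphism. I would present the coloring as an explicit $5\times 5$ color array and verify directly that each color class is $P_4$-free, i.e.\ a disjoint union of stars. This verification is routine once the array is in hand; the obstacle is finding it, and I expect to rely on the construction provided in Section \ref{const} rather than rediscovering it by ad hoc search. With both bounds in place we conclude $f(4)=6$, and hence $st(K_{5,5})=4$.
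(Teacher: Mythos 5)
Your upper bound argument is correct and genuinely different from the paper's: the paper instead proves the stronger Lemma~\ref{56} (every $4$-coloring of $K_{5,6}$ contains a monochromatic $P_4$) via a count of ``special center vertices'' together with $f(3)=4$, whereas your self-contained $K_{6,6}$ count ($e_i+k_i\le 12$, pigeonhole forcing a degree-$\ge 2$ color at every vertex, equality forcing each color class to be three spanning claws, and the parity contradiction $a_i+3(3-a_i)=6$) works without any appeal to the $3$-color case. The checks all go through: equality in $12\le \#\{(v,i):\deg_{F_i}(v)\ge 2\}\le \sum_i k_i\le 12$ does force no isolated vertices, no single-edge stars, and degree exactly $(3,1,1,1)$ at every vertex, so each forest is three vertex-disjoint spanning copies of $K_{1,3}$, and then $a_i=3/2$ is the desired contradiction. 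One caveat: your bound is weaker than Lemma~\ref{56}, which the paper needs anyway as the base case of the induction in Proposition~\ref{rr}; for Proposition~\ref{f4} alone, however, $K_{6,6}$ suffices.

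The genuine gap is in your lower bound. You defer the construction of a $P_4$-free $4$-coloring of $K_{5,5}$ to ``the construction provided in Section~\ref{const},'' but that construction cannot supply it: it converts the $(r-1)$-coloring of the complete graph $K_{2r-4}$ into an $r$-coloring of $K_{2r-4,2r-4}$, which for $r=4$ yields only a $4$-coloring of $K_{4,4}$, i.e.\ $f(4)\ge 5$. The entire point of Proposition~\ref{f4} is that $r=4$ is the exceptional case where the general pattern $f(r)=2r-3$ fails: $f(4)=6>2\cdot 4-3$. The required coloring of $K_{5,5}$ is sporadic --- the paper remarks that it ``took some time (and faith)'' to find and that a computer search shows it is unique up to isomorphism --- and the paper's proof of the proposition consists essentially of exhibiting it explicitly (Figure~\ref{fig:f4}): three color classes each a union of three paths $P_3$, and one color class consisting of two copies of $P_3$ together with a $3$-edge star. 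Without producing this coloring, or an equivalent one, the proposition is not proved; your (correct) counting for $K_{6,6}$ illustrates how little slack there is, but it does not produce the object, and the source you point to does not contain it.
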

\begin{proof}

First note that the upper bound follows from Lemma \ref{56}. The construction showing $f(4)\geq 6$ is defined as follows. Denote the vertex set of $K_{5,5}$ by $\{A_1, \dots, A_5, B_1, \dots, B_5\}$ where the $A_i$'s form one side of the bipartition and the $B_i$'s form the other. All color classes have three components and components but one are $P_3$-s. The exceptional component is a star with three edges. We use the convention that $XYZ$ denotes the path with edges $XY,YZ$ and $X;A,B,C$ denotes the star with edges $XA,XB,XC$.

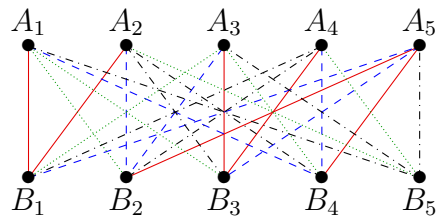
\begin{figure}[h]
\centering
\begin{tikzpicture}

\newcommand\width{1.3};
\newcommand\height{1.75};

\node[left] at (0, 1) 
    {\begin{varwidth}{\linewidth}\begin{enumerate}
\item $A_1B_1A_2, A_3B_3A_4, B_2A_5B_4$
\item $A_1B_4A_4, A_2B_2A_3, B_1A_5B_3$
\item $A_2B_5A_4, B_1A_3B_4, B_2A_1B_3$
\item $B_1A_4B_2, B_3A_2B_4, B_5; A_1A_3A_5$
    \end{enumerate}\end{varwidth}};

\foreach \i in {1,...,5} {
	\coordinate (A\i) at (\i*\width,\height);
	\coordinate (B\i) at (\i*\width,0);
}

\draw[line0] (A1) -- (B1) -- (A2);
\draw[line0] (A3) -- (B3) -- (A4);
\draw[line0] (B2) -- (A5) -- (B4);

\draw[line1] (A1) -- (B4) -- (A4);
\draw[line1] (A2) -- (B2) -- (A3);
\draw[line1] (B1) -- (A5) -- (B3);

\draw[line2] (A2) -- (B5) -- (A4);
\draw[line2] (B1) -- (A3) -- (B4);
\draw[line2] (B2) -- (A1) -- (B3);

\draw[line3] (B1) -- (A4) -- (B2);
\draw[line3] (B3) -- (A2) -- (B4);
\draw[line3] (A1) -- (B5) -- (A3);
\draw[line3] (B5) -- (A5);

\foreach \i in {1,...,5} {
	\draw (A\i) node{\textbullet};
	\draw (A\i) node[above]{$A_\i$};
	\draw (B\i) node{\textbullet};
	\draw (B\i) node[below]{$B_\i$};
}

\end{tikzpicture}
\caption{The construction showing $f(4)\geq 6$.}\label{fig:f4}
\end{figure}
~
\end{proof}

\begin{proposition}\label{rr} For all integers $r$ with $r\geq 5$, $f(r)=2r-3$, i.e. $st(K_{2r-3,2r-3})=r+1$.
\end{proposition}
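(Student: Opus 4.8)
The plan is to work entirely in the language of star arboricity, using the Observation that relates $f$ to $st$. Since $f(r)=2r-3$ is equivalent to the two inequalities $st(K_{2r-4,2r-4})\le r$ and $st(K_{2r-3,2r-3})\ge r+1$ (together with the monotonicity of $st(K_{n,n})$ in $n$), I would prove these two bounds separately. The first is exactly the lower bound $f(r)\ge 2r-3$: it asks for an $r$-coloring of $K_{2r-4,2r-4}$ with no monochromatic $P_4$, equivalently a partition of its edge set into $r$ star forests, and for this I would simply invoke the explicit construction of Section~\ref{const} (the same colorings blown up in Proposition~\ref{prop:lbound}). So the substantive work is the second inequality, $st(K_{2r-3,2r-3})\ge r+1$, i.e.\ the upper bound $f(r)\le 2r-3$: every $r$-coloring of $K_{2r-3,2r-3}$ contains a monochromatic $P_4$.

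I would establish $st(K_{2r-3,2r-3})\ge r+1$ by induction on $r\ge 5$. For the inductive step I would cite the reduction of Egawa, Fukuda, Nagoya and Urabe \cite{EFNU}, which yields $st(K_{m+2,m+2})\ge st(K_{m,m})+1$: from a decomposition of $K_{m+2,m+2}$ into $t$ star forests one deletes two suitably chosen vertices from each side so as to eliminate one entire forest (its edges being covered by the four deleted vertices), leaving a decomposition of $K_{m,m}$ into $t-1$ star forests and hence $t\ge st(K_{m,m})+1$. Taking $m=2r-3$ propagates the bound from $r$ to $r+1$, since $2(r+1)-3=2r-1=m+2$. Because $2r-3$ is odd for every $r$, the induction stays among odd values of $n$, so a single base case at $r=5$ suffices.

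The hard part---and precisely the gap left open in \cite{EFNU}---is launching the induction, i.e.\ proving $st(K_{7,7})\ge 6$ (equivalently $f(5)\le 7$). One cannot bootstrap this from Proposition~\ref{f4}: although $st(K_{5,5})=4$, a single application of the inductive step only gives $st(K_{7,7})\ge 5$, because $K_{5,5}$ sits one unit below the eventual line $st(K_{n,n})=\lceil n/2\rceil+2$ that governs the range $n\ge 6$. To recover the missing unit I would argue directly that $K_{7,7}$ is not a union of $5$ star forests. Assuming a decomposition $F_1,\dots,F_5$, the natural crude estimates fall just short: each star forest of $K_{7,7}$ has at most $2\cdot 7-2=12$ edges, so counting gives only $\lceil 49/12\rceil=5$ forests; and orienting every edge toward the center of its star (so that a vertex's out-degree equals the number of forests in which it is a leaf, the out-degrees summing to $49$, with no vertex a leaf in all $5$ forests since its degree is $7>5$) again produces a bound below $6$. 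The main obstacle is therefore a finer structural case analysis---tracking, forest by forest, how the at-most-$12$ edges split their centers between the two sides and how the high-degree centers must overlap---to exclude every hypothetical $5$-forest configuration. This base-case analysis, combined with the cited inductive step and the construction of Section~\ref{const}, is what completes the proof.
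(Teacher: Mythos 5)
Your proposal stops exactly where the real work begins. You correctly reduce everything to the upper bound $f(r)\le 2r-3$, correctly observe that a symmetric inductive step $st(K_{m+2,m+2})\ge st(K_{m,m})+1$ cannot be launched from $st(K_{5,5})=4$, and then declare that the resulting base case $st(K_{7,7})\ge 6$ requires ``a finer structural case analysis''---which you never carry out. That analysis is not a detail; it is the entire content of the problem. As the paper itself points out, \cite{EFNU} already had a valid inductive step and no base case, and supplying the missing base case is precisely what Section~\ref{small} exists to do. A proof that cites the \cite{EFNU} step and leaves the base case as an unexecuted plan reproduces the known gap rather than closing it. A secondary problem: your one-line account of the inductive step (``delete two vertices from each side so as to eliminate one entire forest'') is not a proof sketch that could be completed as stated. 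It is false in general that a star-forest decomposition contains a forest covered by two vertices per side (perfect matchings are star forests), so the claim can only hold conditionally, and its proof is a dichotomy: \emph{either} two special center vertices of the same color lie on the same side, enabling the vertex deletion, \emph{or} no color has two such centers, in which case there are at most $r$ special centers, hence at least $2\times 2(2r-3)-r$ star components, hence at most $(2r-3)^2-(r-3)<(2r-3)^2$ edges, a contradiction obtained by counting, not by reduction. That counting half is indispensable and entirely absent from your sketch.

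The paper's solution, which your plan misses, is to change the induction rather than to fight $K_{7,7}$ head-on. In its step, the two same-color special centers $x_1,x_2\in X$ each have red degree at least $r-2$, their red neighborhoods are disjoint, and the bipartite graph between $2r-4$ of these red neighbors and $X\setminus\{x_1,x_2\}$ is red-free; so only \emph{one} vertex of $Y$ is discarded, and an $r$-colored $P_4$-free $K_{2r-3,2r-3}$ yields an $(r-1)$-colored $P_4$-free $K_{2r-5,2r-4}$. The induction therefore launches from the asymmetric Lemma~\ref{56} (every $4$-coloring of $K_{5,6}$ has a monochromatic $P_4$), which is tractable because one can lean on $f(3)=4$: two special centers of one color on the $6$-vertex side would produce a $3$-colored $P_4$-free $K_{4,4}$, and the remaining cases fall to the same special-center/counting argument. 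The asymmetry is forced, not cosmetic: a square reduction would have to launch at $4$-colorings of $K_{5,5}$, where the statement is false by Proposition~\ref{f4}. Your own correct observation that $K_{5,5}$ ``sits one unit below the line'' is exactly the reason the paper reduces $K_{7,7}$ to $K_{5,6}$ instead of $K_{5,5}$; recognizing that the base case should be made asymmetric, rather than attempted directly at $K_{7,7}$, is the missing idea.
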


The proof uses the induction idea of \cite{EFNU} but we (necessarily) reduce the $K_{7,7}$ case to the $K_{5,6}$ case. The {\em center vertex} of a star is its unique vertex of maximum degree, except for the one-edge star, in which case we arbitrarily choose one of the two vertices to be designated as the center.  We say that a star is non-trivial if it has at least one edge.  In a $P_4$-free coloring of a complete bipartite graph a vertex is a {\em special center vertex of color $i$} if it is a center vertex of a star of color $i$ but not a center vertex of any star of any other color. Note that special central vertices $v,w$ of color $i$ cannot be in different partite classes, otherwise the edge $vw$, which has color $j\neq i$, is not incident with the center of a star in color $j$, a contradiction.

We first prove the following lemma.

\begin{lemma}\label{56} In every 4-coloring of $K_{5,6}$ there exists a monochromatic $P_4$.
\end{lemma}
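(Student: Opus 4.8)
The plan is to suppose, for contradiction, that $K_{5,6}$ admits a $P_4$-free $4$-coloring, so that each of the four color classes is a star forest, and then to squeeze enough rigidity out of the global edge count to clash with the structure of star forests. Write $X,Y$ for the two sides, $|X|=5$, $|Y|=6$, so every vertex of $X$ has degree $6$, every vertex of $Y$ has degree $5$, and there are $30$ edges. I will call $v$ a \emph{center of color $i$} if it carries at least two edges of color $i$; single-edge stars, whose center is a matter of choice, I will treat as a minor separate point, since the argument will show there are essentially none of them. Since the four color-degrees at a vertex sum to $6$ (resp. $5$) and any color-degree that is not at a center is at most $1$, every vertex is a center of at least one color. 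Moreover, if $v$ is a \emph{special} center (a center of exactly one color, the situation highlighted just before the lemma), then that color must absorb the excess degree: a special center in $X$ has a color-degree at least $3$, and one in $Y$ at least $2$.

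Next I would count stars. In color $i$ with $e_i$ edges, $t_i$ nontrivial stars, and $V_i$ covered vertices, one has $e_i=V_i-t_i$; summing, $\sum_i t_i=\sum_i V_i-30=\sum_v\nu_v-30$, where $\nu_v\le 4$ is the number of colors at $v$. Hence $\sum_i t_i\le 44-30=14$, while the number of center-memberships is at least $11$ (one per vertex). Thus at most $3$ vertices are centers of two or more colors (``multi-centers''), so at least $8$ of the $11$ vertices are special centers. Let $p,q$ be the numbers of special centers in $X,Y$, so $p+q\ge 8$. Two capacity bounds now apply: in any color the centers lying in $Y$ each consume at least two of the five leaf-slots in $X$, so there are at most $2$ such centers per color, giving $q\le 2\beta$ where $\beta$ is the number of colors having a special center in $Y$; and each special center in $X$ consumes at least three of the six leaf-slots in $Y$, so $p\le 2\alpha$ with $\alpha$ the number of colors having a special center in $X$. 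The cited fact that the special centers of a single color all lie on one side means the ``$X$-type'' and ``$Y$-type'' colors are disjoint, so $\alpha+\beta\le 4$. Then $8\le p+q\le 2(\alpha+\beta)\le 8$ forces equality everywhere, and a degree-sum count (each special $X$-center contributes $\ge 3$, each special $Y$-center $\ge 2$, and each of the three multi-centers $\ge 4$, all summing to $30$) yields $3p+2q\le 18$, which kills $(p,q)=(4,4)$ and pins down $(p,q)=(2,6)$ with every inequality tight.

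From this tightness I would read off a rigid configuration: there is a unique $X$-type color and three $Y$-type colors; every vertex of $Y$ has color-degree pattern $(2,1,1,1)$ with its heavy (center) color among the three $Y$-type colors; the two special $X$-centers have pattern $(3,1,1,1)$ with their heavy color the unique $X$-type color; the remaining three vertices of $X$ are multi-centers that are centers of exactly two colors with color-degree exactly $2$ (pattern $(2,2,1,1)$); and there are no single-edge stars. The contradiction then falls out: since every vertex of $Y$ spends exactly one edge in the $X$-type color, that color has exactly $6$ edges, and these are all incident to the two special $X$-centers (which already account for $3+3=6$). Hence each of the three multi-centers has color-degree $0$ in the $X$-type color — impossible for a vertex of pattern $(2,2,1,1)$, which has no zero entry.

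The main obstacle is that no purely numerical count suffices: four star forests could in principle hold $4\cdot 9=36\ge 30$ edges, and the star-count bound gives only $\sum_i t_i\le 14$ against the needed $\ge 11$, so the slack never closes on its own. The contradiction must instead be structural, and the crux is to combine the same-partite-class property of special centers with the two side-capacity bounds to rigidify the coloring, after which the asymmetry $|Y|=|X|+1$ does the work — precisely the feature missing in $K_{5,5}$, which genuinely admits a $P_4$-free $4$-coloring. A secondary technical nuisance is the center ambiguity of single-edge stars, which I would control by the same counting (they are few, and none survive in the extremal configuration), so that the final clean degree patterns are legitimate.
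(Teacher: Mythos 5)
Your counting framework is largely correct and is a genuinely different route from the paper's: you never invoke $f(3)=4$, whereas the paper bounds special centers \emph{per color} from above (at most one per color in $Y$, via a reduction to a $3$-colored $K_{4,4}$; at most two per color in $X$) and then contradicts a pure component count. Your proof, however, has a genuine gap at the single load-bearing step $\alpha+\beta\le 4$. The fact you cite --- that special centers of one color cannot lie in different partite classes --- is proved in the paper under the convention that \emph{every} star component, including a one-edge star, has a designated center, so that every edge is incident to the center of its component; the contradiction there is precisely that the edge $vw$ between two cross special centers of color $i$ would lie in a color-$j$ star whose center is neither $v$ nor $w$. You instead define a center to be a vertex with at least two edges of a color. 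Under your definition that argument evaporates: if $v\in X$ and $w\in Y$ are both special of color $i$ in your sense, then $vw$ has some color $j\ne i$, both endpoints have at most one edge of color $j$, and so $vw$ is simply a one-edge star of color $j$ --- perfectly consistent with all four classes being star forests. So the same-side fact is not available for your notion of special center, and you cannot import it.

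This is not the ``minor separate point'' you promise to absorb later, because your two key inequalities live in incompatible conventions: the degree-sum bound (each multi-center contributes at least $4$, total at most $30$) is valid only for your ``at least two edges'' centers, while $\alpha+\beta\le 4$ is valid only for the paper's designated centers, under which a non-special vertex may be the designated center of a trivial or one-edge star and need not contribute $4$. Your proposed remedy --- that one-edge stars are few and none survive the extremal configuration --- is circular: the extremal configuration $(p,q)=(2,6)$ is deduced \emph{from} $\alpha+\beta\le 4$, which is exactly what one-edge stars undermine. Without that inequality, your constraints ($p+q\ge 8$, $p\le 5$, $q\le 6$, and $3p+2q+4m\le 30-s$ with $m=11-p-q$ and $s$ the number of one-edge stars) leave open, for instance, $m=0$ with $(p,q)=(5,6)$, as well as several cases with $m=1,2$, and in the endgame the possibility that the two special $X$-centers have different heavy colors. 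The gap does look fillable --- each color with special centers on both sides forces pairwise-distinct one-edge stars, one per cross pair, while your star count caps their number at $3-m$, and this tension kills the stray cases; moreover once $s=0$ is forced, the same-side fact becomes true and your final contradiction goes through --- but this requires a real additional case analysis, so as written the proof is incomplete.
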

\begin{proof} Let $X,Y$ be the vertex classes of $K_{5,6}$ with $|X|=5$ and $|Y|=6$ and suppose for contradiction that we are given a $P_4$-free $4$-coloring of the edges. Suppose there are two special center vertices $y_1,y_2\in Y$ of the same color, say red. Since $y_1$ and $y_2$ are each incident with five edges, at most one of each color other than red, both $y_1$ and $y_2$ have red degree at least 2. Now any four red neighbors of $y_1,y_2$ together with the four vertices of $Y\setminus \{y_1, y_2\}$ define a $P_4$-free $K_{4,4}$, colored with three colors, contradicting the fact that $f(3)=4$.  Thus $Y$ contains at most one special center vertex of each color.

Now suppose that there are two special center vertices $x_1,x_2\in X$ of the same color, say red.  Since $x_1$ and $x_2$ are each incident with six edges, at most one of each color other than red, both $x_1$ and $x_2$ have red degree at least 3.  So $x_1$ and $x_2$ have red degree exactly 3, implying that the red color class is completely determined, i.e. it has two 3-edge stars with centers $x_1,x_2$.  Since $|X|=5$, there are at most two colors which have exactly two special center vertices in $X$.

Combining the two observations above with the fact that special center vertices of the same color cannot appear on opposite sides of the bipartition, we have that there are at most six special center vertices total.  Since every vertex is a center of at least two star components, except for the special central vertices, there are at least $2\times 11- 6=16$ star components total.  The number of edges in color class $i$ is $11-c_i$ where $c_i$ is the number of components in color class $i$ (including trivial stars).  So the total number of edges is at most $4\times 11-16=28<30$, a contradiction.
\end{proof}

\begin{proof}[Proof of Proposition \ref{rr}.] Let $r$ be an integer with $r\geq 5$ and suppose that every $(r-1)$-coloring of $K_{2r-5, 2r-4}$ has a monochromatic $P_4$ -- note that Lemma \ref{56} provides the base case.  Let $X,Y$ be the vertex classes of $K_{2r-3,2r-3}$ and suppose for contradiction that we are given a $P_4$-free $r$-coloring of the edges.  Suppose that there are two special center vertices of the same color, say red, on the same side of the bipartition, say $x_1, x_2\in X$.  Since $x_1$ and $x_2$ are each incident with $2r-3$ edges, at most one of each color other than red, both $x_1$ and $x_2$ have red degree at least $2r-3-(r-1)=r-2$. Thus we can select a set $A$ in $Y$ consisting of $r-2$ red neighbors of $x_1$ and $r-2$ red neighbors of $x_2$. Then the complete bipartite graph $[A,X\setminus \{x_1, x_2\}]$ is a $P_4$-free $K_{2r-5,2r-4}$ colored with $(r-1)$-colors, contradicting the inductive hypothesis.  Combined with the fact that special center vertices of the same color cannot appear on opposite sides of the bipartition, this implies that there is at most one special center vertex in all colors, a total of at most $r$ special center vertices.

Since every vertex is a center of at least two star components, except for the special center vertices, there are at least $2\times 2(2r-3)-r=7r-12$ star components. The number of edges in color class $i$ is $2(2r-3)-c_i$ where $c_i$ is the number of components in color class $i$ (including trivial stars).  So the total number of edges is at most $$r\times 2(2r-3)-(7r-12)=4r^2-13r+12=(2r-3)^2-(r-3)<(2r-3)^2,$$ a contradiction.
\end{proof}

\section{Construction: $f(r)\ge 2r-3$}\label{const}

Yongqi, Yuansheng, Feng, and Bingxi \cite{YYFB} gave new lower bounds for the multicolor Ramsey numbers of paths and even cycles by constructing a coloring of the {\em complete graph} $K_{2r-4}$ with $r-1$ colors such that all but one of the color classes are the union of two stars of size $r-2$ and one color class forms a perfect matching. Note that this construction also shows that $st(K_{2r-4})\leq r-1$. Indeed, in the language of star arboricity, this example was independently discovered by Akiyama and Kano \cite{AK}.

In \cite{EFNU}, the authors give an example to show that $st(K_{n,n})\leq \lceil n/2\rceil+2$ for $n\geq 7$ which implies $f(r)\geq 2r-3$ for $r\geq 5$.  However, it is worth showing how to transform the $(r-1)$-coloring of $K_{2r-4}$ given above into the $r$-coloring of $K_{2r-4,2r-4}$ required for Theorem \ref{fr} (which was how we discovered the lower bound originally).  The transformation used here is explored many times in graph theory, transforming $K_n$ into $K_{n,n}$ by replacing its vertices by a $1$-factor and its edges by symmetric pairs of edges.

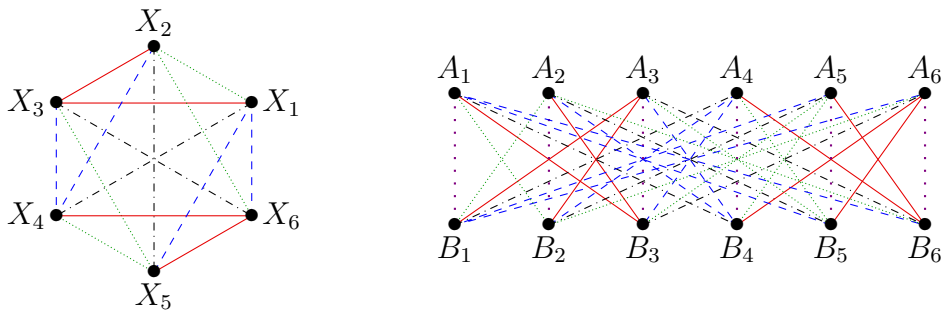
\begin{figure}[ht!]
\centering
\begin{tikzpicture}
\newcommand\radius{1.5};
\newcommand\width{1.25};
\newcommand\height{1.75};

\foreach \i in {0,...,5} {
	\coordinate (\i) at ({\radius*cos(60*\i+30)},{\radius*sin(60*\i+30)});
}

\draw (0) node[right]{$X_1$};
\draw (1) node[above]{$X_2$};
\draw (2) node[left]{$X_3$};
\draw (3) node[left]{$X_4$};
\draw (4) node[below]{$X_5$};
\draw (5) node[right]{$X_6$};

\foreach \i in {0,...,5} {
	\tikzmath{\j=int(Mod(\i+1,6)); \k=int(Mod(\i+2,6)); \c=int(Mod(\i,3)); \o=int(Mod(\i+3,6));}
	\draw[line\c] (\i) -- (\k) -- (\j);
	\ifnum \i<3
	\draw[line3] (\i) -- (\o);
	\fi
}

\foreach \i in {0,...,5} {
	\draw (\i) node{\textbullet};
}

\begin{scope}[shift={({\radius+2.5},-.5*\height)}]

\foreach \i in {0,...,5} {
	\coordinate (A\i) at (\i*\width,\height);
	\coordinate (B\i) at (\i*\width,0);
	\tikzmath{\j=int(\i+1);}
	\draw (A\i) node[above]{$A_\j$};
	\draw (B\i) node[below]{$B_\j$};
}

\foreach \i in {0,...,5} {
	\tikzmath{\j=int(Mod(\i+1,6)); \k=int(Mod(\i+2,6)); \c=int(Mod(\i,3)); \o=int(Mod(\i+3,6));}
	\draw[line\c] (A\i) -- (B\k) -- (A\j);
	\draw[line\c] (B\i) -- (A\k) -- (B\j);
	\draw[line4] (A\i) -- (B\i);
	\ifnum \i<3
	\draw[line3] (A\i) -- (B\o);
	\draw[line3] (B\i) -- (A\o);
	\fi
}

\foreach \i in {0,...,5} {
	\draw (A\i) node{\textbullet};
	\draw (B\i) node{\textbullet};
}
\end{scope}
\end{tikzpicture}
\caption{The construction showing $f(r)\geq 2r-3$ in the case where $r=5$.}\label{fig:stars}
\end{figure}

Consider the complete graph on vertex set $\{X_1, X_2, \dots, X_{2r-4}\}$ with the following $(r-1)$-coloring, where indices are computed $(\mathrm{mod}~ 2r-4)$. For each $i\in [r-2]$ color class $i$ consists of two vertex disjoint
stars, one centered at $X_i$ with leaves $X_{i+1}, \dots, X_{i+r-3}$ and the other centered at $X_{i+r-2}$ with leaves $X_{i+r-1}, \dots, X_{i+2r-5
}$, and color class $r-1$ consists of a matching $\{X_1X_{r-1}, X_2X_r, \dots, X_{r-2}X_{2r-4}\}$ (see Figure \ref{fig:stars}).

Now consider the following $r$-coloring of the complete bipartite graph with vertex sets $\{A_1, \dots, A_{2r-4}\}, \{B_1, \dots, B_{2r-4}\}$:  For all $i\neq j$, color $A_iB_j$ with the color of $X_iX_j$ and for all $i\in [2r-4]$, color $A_iB_i$ with color $r$.  Since each color class is a monochromatic star-forest, there are no monochromatic $P_4$'s.


\begin{thebibliography}{99}

\bibitem{AK} J. Akiyama and M. Kano, Path factors of a graph, \emph{Graph theory and its Applications}, (1984), 11--22.

\bibitem{BS} F.S. Benevides, J. Skokan, The 3-colored Ramsey number of even
cycles, {\em   Journal of Combinatorial Theory, Ser. B,} {\bf 99} no. 4
(2009), 690--708.

\bibitem{BGY} J. Bierbrauer, A. Gy\'arf\'as, On $(n,k)$-colorings of complete graphs, {\em Congressus Nymerantium} {\bf 58} (1987), 127--131.

\bibitem{BLS} M. Buci\'c, S. Letzter, B. Sudakov, Three colour bipartite Ramsey number of cycles and paths, {arXiv:1803.03689v1}

\bibitem{CGyLT} G. Chen, A. Gy\'arf\'as, J. Lehel, A. T\'oth, Around a biclique cover conjecture, arXiv:1212.6861.

\bibitem{EFNU} Y. Egawa, T. Fukuda, S. Nagoya, M. Urabe, A decomposition of complete bipartite graphs into edge-disjoint subgraphs with star components, {\em Discrete Mathematics}, {\bf 58} (1986), 93--95.

\bibitem{FS2} R.J. Faudree, R.H. Schelp, Path-path Ramsey-type
numbers for the complete bipartite graph, {\em J. Combinatorial Theory Ser. B},
{\bf 19} no. 2 (1975), 161--173.

\bibitem{FL} A. Figaj, T. {\L}uczak, The Ramsey number
for a triple of long even cycles, {\em Journal of Combinatorial
Theory, Ser. B} {\bf 97} (2007), 584--596.


\bibitem{GYSUR} A. Gy\'arf\'as, Large monochromatic components in edge colorings of graphs: a survey, in {Ramsey Theory, Birkh\"auser, Springer 2011, in Progress in Mathematics 285, A. Soifer, editor},  77--94.

\bibitem{GYSUR2} A. Gy\'arf\'as, Vertex cover by monochromatic pieces - a survey of results and problems, {\em Discrete Mathematics} {\bf 339 } no. 7 (2016), 1970--1977.

\bibitem{GY1} A. Gy\'arf\'as, Partition coverings and blocking sets in hypergraphs (in Hungarian) {\em Commmun. Comput. Autom. Inst. Hungarian Academy of Sciences} {\bf 71. } (1977) 62 pp.


\bibitem{GYL} A. Gy\'arf\'as, J. Lehel, A Ramsey type problem in
directed and bipartite graphs, {\em Periodica Math. Hung.}, {\bf 3} (1973), 261--270.

\bibitem{GRS} A. Gy\'arf\'as, C. C. Rousseau, and R. H. Schelp. An extremal problem for paths in bipartite graphs. \emph{Journal of graph theory} {\bf 8}, no. 1 (1984), 83--95.

\bibitem{GRSS} A. Gy\'arf\'as, M. Ruszink\'o, G.N. S\'ark\"ozy and E.
Szemer\'edi, Three-color Ramsey number for paths,  {\em
Combinatorica}, {\bf 27} no. 1 (2007), 35--69.

\bibitem{KL} M. Kano, X. Li, Monochromatic and heterochromatic subgraphs in edge colored graphs - A Survey, {\em Graphs and Combinatorics} {\bf 24} (2008), 237--263.


\bibitem{KSS} Y. Kohayakawa, M. Simonovits. J. Skokan, The
$3$-colored Ramsey number of odd cycles, {\em Journal of
Combinatorial Theory, Ser. B}, {\bf 99}, no. 4 (2009), 690--708.


\bibitem{LMP} H. Liu, R. Morris, N. Prince, Highly connected monochromatic subgraphs of multicoloured graphs, {\em Journal of Graph Theory} {\bf 61} no. 1 (2009), 22--44.


\bibitem{L} T. {\L}uczak, $R(C_n, C_n, C_n) \leq (4+o(1))n$,
{\em Journal of Combinatorial Theory, Ser. B} {\bf 75} (1999), 174--187.

\bibitem{LSS} T. {\L}uczak, M. Simonovits, J. Skokan, On the
multi-colored Ramsey numbers of cycles, {\em Journal of Graph
Theory} {\bf 69} (2012), 169--175.



\bibitem{MU} D. Mubayi, Generalizing the Ramsey problem through diameter, {\em Electronic Journal of Combinatorics} {\bf 9.} (2002), R41.


\bibitem{RAD} S.P. Radziszowski, Small Ramsey numbers,
{\em Electronic Journal of Combinatorics}, (2017), DS1.15.


\bibitem{YYFB} S. Yongqi, Y. Yuansheng, X. Feng, and L. Bingxi, New Lower Bounds on the Multicolor Ramsey Numbers $R_r(C_{2m})$. \emph{Graphs and Combinatorics} {\bf 22}, no. 2 (2006), 283--288.
\end{thebibliography}
\end{document}